\newcounter{alphthm}
\newtheorem{propriete}[alphthm]{Theorem}
\newtheorem{thm}{Theorem}[section]
\newtheorem{cor}{Corollary}[section]
\theoremstyle{definition}
\newtheorem{rem}{Remark}[section]
\newtheorem{ex}{Example}
\newcommand{\be}{\begin{equation}}
\newcommand{\ee}{\end{equation}}
\newcommand{\pa}{{\partial}}
\newcommand{\pxi}{{\pa \over \pa x^i}}
\title{On Berwald Spaces with non-Zero Flag Curvature}
\author{A. Tayebi and B. Najafi}
\numberwithin{equation}{section}
\begin{document}

\maketitle
\begin{abstract}
We prove that every Berwald manifold with non-zero flag curvature is Riemannian. This result provides an extension of Numata  and  Szab\'{o}'s rigidity theorems. We show that every positively curved constant isotropic Berwald manifold is Riemannian or locally Minkowskian. Then, we prove that every compact strictly positive (or negative) isotropic Berwald manifold reduces to a Berwald manifold. Finally, we prove that every   homogeneous  isotropic Berwald   metric is  either  locally Minkowskian, or Riemannian, or Berwald metric or Berwald-Randers metric generalizing result previously only known in the case of Randers metric.\\\\
{\bf {Keywords}}:   Berwald manifold, flag curvature, mean Landsberg curvature, $S$-curvature.\footnote{ 2013 Mathematics subject Classification: 53B40, 53C60.}
\end{abstract}

\section{Introduction}
There are two types of curvatures in Finsler  geometry, i.e.,  Riemannian and non-Riemannian curvatures. These curvatures interact to each other and have some mysterious and hidden  relations between them. Among the non-Riemannian curvatures, the Berwald curvature has a central position. Finsler metrics with vanishing Berwald curvature are called Berwald metrics. Let  $(M, F)$ be  a Finsler manifold and $\gamma: [a, b]\rightarrow M$ be a piecewise $C^\infty$ curve from $\gamma(a)=p$ to $\gamma(b)=q$. For every vector $u\in T_pM$, let us define $P_\gamma:T_pM\rightarrow T_qM$ by $P_\gamma(u):=U(b)$, where $U=U(t)$ is the parallel vector field along $\gamma$ such that $U(a)=u$. $P_\gamma$ is called the parallel translation along $\gamma$. In \cite{I},  Ichijy\={o} showed that if  $F$ is a Berwald metric, then  all tangent spaces $(T_xM, F_x)$ are linearly isometric to each other.

The flag curvature ${\bf K} = {\bf K}(P, y)$ of a Finsler metric $F = F(x, y)$ on an $n$-manifold $M$ is a function of ``flag" $P\subset T_xM$
and ``flag pole" $ y\in T_xM$ at $x$ with $y\in P$. The flag curvature is a natural extension of the sectional curvature in Riemannian geometry. A Finsler metric $F$ is of scalar flag curvature, if ${\bf K}(x, y, P)={\bf K}(x, y)$ is independent of $ P$. In this case, the flag curvature is just a scalar function on the tangent space of $M$. By definition, every Berwald  metric is affinely equivalent to a Riemannian metric \cite{ShDiff}. Therefore, Berwald metrics are closeness  Finsler metrics to the Riemannian metrics.  In 1975, Numata considered Berwald metrics of scalar flag curvature and proved the following.
\begin{propriete}{\rm (Numata Theorem \cite{Num})}\label{Num}
Let $(M, F)$ be a Berwald manifold of dimension $n\geq3$.   Suppose that $F$ is of scalar
flag curvature ${\bf K}={\bf K}(x, y)$. Then, the following hold
\begin{itemize}
\item[1.] If\ \ ${\bf K}$ vanishes on $TM_0$, then $F$ is locally Minkowskian;
\item[2.] If\ \ ${\bf K}$ is no where zero on $TM_0$, then $F$ is Riemannian metric.
\end{itemize}
\end{propriete}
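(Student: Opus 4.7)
I split into the two cases of the statement and exploit in both that, since $F$ is Berwald, the spray coefficients
\[ G^i(x,y)=\tfrac12\,\Gamma^i_{jk}(x)\,y^jy^k \]
are quadratic in $y$; hence the Riemann curvature is also quadratic in $y$,
\[ R^i{}_k(x,y) \;=\; R^i{}_{mkj}(x)\,y^m y^j, \]
where $R^i{}_{mkj}$ are the curvature components of the (ordinary, linear) Berwald connection on $M$. I shall also use the defining relation of scalar flag curvature,
\[ R^i{}_k \;=\; {\bf K}\,h^i{}_k, \qquad h^i{}_k := F^2\delta^i_k - F\,F_{y^k}\,y^i. \]

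\emph{Case 1} (${\bf K}\equiv 0$). Combining the two displayed identities gives $R^i{}_{mkj}(x)\,y^m y^j\equiv 0$; the algebraic symmetries of the Riemann curvature of a torsion-free linear connection (antisymmetry in one pair of indices together with the first Bianchi identity) then force $R^i{}_{mkj}\equiv 0$, so the Berwald connection is flat. On any simply-connected neighborhood I can therefore choose coordinates in which $\Gamma^i_{jk}\equiv 0$; hence $G^i\equiv 0$ and parallel translation along every curve is the identity. By Ichijy\={o}'s theorem cited in the introduction this parallel translation is also a linear isometry of the Minkowski norms, so $F(x,y)=F(x_0,y)$ is locally independent of $x$, and $F$ is locally Minkowskian.

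\emph{Case 2} (${\bf K}$ nowhere zero). Contracting $i=k$ in the scalar flag curvature relation and using $F_{y^i}y^i=F$ yields
\[ R^i{}_i(x,y) \;=\; (n-1)\,{\bf K}(x,y)\,F(x,y)^2. \]
The left-hand side is a quadratic form in $y$ with coefficients depending only on $x$. So if one can show that ${\bf K}$ depends on $x$ alone, then $F^2 = R^i{}_i/((n-1){\bf K})$ is a quadratic form in $y$ with coefficients in $x$, whence $F$ is Riemannian.

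The crux of the argument is therefore to prove: \emph{a Berwald metric of scalar flag curvature with ${\bf K}$ nowhere zero satisfies ${\bf K}={\bf K}(x)$.} To establish this I would invoke the Matsumoto--H\=oj\=o--type identity which, for any Finsler metric of scalar flag curvature, expresses the Landsberg tensor $L_{ijk}$ (equivalently its trace $J_i$) as a linear combination of ${\bf K}$, its vertical derivatives ${\bf K}_{y^m}$, and the Cartan tensor $C_{ijk}$ (with its trace $I_i$). The Berwald hypothesis forces $L_{ijk}\equiv 0$, turning this identity into an algebraic constraint among $({\bf K},{\bf K}_{y^m},C_{ijk})$. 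Contracting it against $y^m$ and against the angular form $h^{jk}$, and using $n\geq 3$ together with ${\bf K}\neq 0$, should collapse the constraint to ${\bf K}_{y^m}\equiv 0$. This vertical-derivative reduction is the delicate step of the proof; once it is in place, the trace relation above finishes the argument.
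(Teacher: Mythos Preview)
Note first that the paper does not prove Numata's theorem separately; it cites it and then proves the stronger Theorem~\ref{MTHM}, whose argument (specialized to the scalar-curvature setting in Corollary~\ref{CSZ}) recovers Numata as a corollary. Your Case~1 is the standard argument and is fine.

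In Case~2 there is a genuine gap. You correctly reduce the problem to showing ${\bf K}={\bf K}(x)$, after which $F^2=R^m{}_m/\big((n-1){\bf K}\big)$ is quadratic in $y$ and $F$ is Riemannian. But you do not actually prove ${\bf K}_{y^m}=0$; you only say that a Matsumoto--H\=oj\=o-type identity ``should collapse'' to this after setting $L_{ijk}=0$ and performing some contractions, and you explicitly label it ``the delicate step.'' That is an outline, not a proof. Filling it in requires writing down the precise scalar-curvature identity relating $L_{ijk|m}y^m$, ${\bf K}$, ${\bf K}_{\cdot m}$ and $C_{ijk}$ (obtained, e.g., by vertically differentiating $R^i{}_k={\bf K}\,h^i{}_k$ and feeding it into \eqref{Moeq1}), imposing $L_{ijk}=0$, and then carrying out the contractions that isolate ${\bf K}_{\cdot m}$ using $h^{ij}h_{ij}=n-1$. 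This does work for $n\ge 3$, but it is not automatic and you have not done it.

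Even with the gap filled, your route differs from the paper's. The paper starts from the identity \eqref{EIILJ}, $J_{k|m}y^m+I_mR^m{}_k={\bf S}_{\cdot k|m}y^m-{\bf S}_{|k}$; on a Berwald space ${\bf S}=0$ and ${\bf J}=0$, so this collapses to ${\bf R}_y({\bf I}_y)=0$. Since ${\bf I}_y$ is ${\bf g}_y$-orthogonal to $y$, a nonzero ${\bf I}_y$ would give a flag $P_0={\rm span}\{y,{\bf I}_y\}$ with ${\bf K}(P_0,y)=0$, contradicting the hypothesis; hence ${\bf I}=0$, and Deicke's theorem finishes. This argument never touches ${\bf K}_{y^m}$, needs no dimension restriction, and does not even use that ${\bf K}$ is scalar---which is exactly why the paper can state the stronger Theorem~\ref{MTHM}. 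Your strategy, by contrast, is closer in spirit to Numata's original one and genuinely uses both $n\ge 3$ and scalar curvature.
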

It is remarkable that, using Theorem \ref{Num}, Numata  provided a same result for Landsberg metrics \cite{Num}. However, Numata theorem has the restriction on the dimension of manifold. Then, in 1981, Szab\'{o} studied two-dimensional Finsler spaces and completed the Numata theorem as follows.
\begin{propriete}{\rm (Szab\'{o}  Theorem \cite{Szabo1})}\label{Szabo}
Let $(M, F)$ be a  2-dimensional Berwald  manifold. Then, the following hold
\begin{enumerate}
  \item If \ \ ${\bf K}$ vanishes on $TM_0$, then $F$ is locally Minkowskian;
  \item If \ \ ${\bf K}$ is no where zero $TM_0$, then $F$ is Riemannian;
  \end{enumerate}
More precisely, every  Berwald surface is either locally Minkowskian  or Riemannian.
\end{propriete}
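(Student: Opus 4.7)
The plan is to combine Ichijy\={o}'s linear-isometry theorem with a holonomy argument. Since $F$ is Berwald, the Berwald connection is an honest linear connection on the tangent bundle $TM$, and the theorem of Ichijy\={o} cited in the excerpt asserts that its parallel translation $P_\gamma : T_xM \to T_qM$ is a linear isometry between the Minkowski norms $F_x$ and $F_q$. Hence the holonomy group $H_x \subset GL(T_xM)$ of the Berwald connection at $x$ sits inside the linear isometry group $\mathrm{Isom}(T_xM, F_x)$ of the Minkowski plane.

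The central technical ingredient is a rigidity lemma in convex geometry: for a $2$-dimensional Minkowski norm, if the isometry group contains a nontrivial $1$-parameter subgroup, then the norm is Euclidean. A $1$-parameter subgroup of $GL(2,\mathbb{R})$ that preserves a bounded centrally symmetric set must, up to conjugation in $GL(2,\mathbb{R})$, consist of ordinary rotations, so the indicatrix is invariant under all rotations in a suitable basis; combined with the smoothness and strong convexity of the indicatrix built into the Finsler assumption, this forces the indicatrix to be an ellipse, i.e., $F_x$ is Euclidean. Consequently, unless $F$ is Riemannian, $H_x$ is a discrete (in fact finite) subgroup of $GL(T_xM)$ and its Lie algebra is trivial.

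Assume now that the flag curvature ${\bf K}$ is nowhere zero on $TM_0$. Applying the Ambrose-Singer theorem to the linear Berwald connection, the Lie algebra of the restricted holonomy is generated by curvature endomorphisms parallel-transported along loops based at $x$. The non-vanishing of ${\bf K}$ produces a nonzero curvature endomorphism, which contradicts the triviality of the holonomy Lie algebra unless $F$ is Riemannian. This proves claim $(2)$. Claim $(1)$ is standard: ${\bf K} \equiv 0$ forces the Berwald connection (a linear connection on the rank-$2$ bundle $TM$, whose only curvature invariant in dimension two is carried by the flag curvature) to be flat, and parallel frames for a flat linear connection provide coordinates in which $F$ becomes $x$-independent, so $F$ is locally Minkowskian.

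The ``more precisely'' clause falls out of the same dichotomy, since the argument shows that any non-Riemannian Berwald surface must have discrete holonomy, hence vanishing Berwald curvature by Ambrose-Singer, hence ${\bf K} \equiv 0$, hence locally Minkowskian. The step I expect to require the most care is the rigidity lemma in paragraph two: explicitly classifying the closed connected subgroups of $GL(2,\mathbb{R})$ that can preserve a smooth, strongly convex, centrally symmetric closed curve, and verifying that only the Euclidean rotation subgroups occur. A secondary, more bookkeeping-style point is to check that in dimension two the flag curvature really does control the entire curvature of the Berwald connection, so that ${\bf K}\equiv 0$ is equivalent to flatness of that connection; this uses the fact that in rank $2$ the curvature tensor has effectively one independent scalar component.
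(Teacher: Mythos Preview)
Your approach is correct but genuinely different from the paper's. You are essentially reconstructing Szab\'{o}'s original holonomy argument: parallel transport for the Berwald connection preserves the Minkowski norms (Ichijy\={o}), so the restricted holonomy group lies in the isometry group of the Minkowski plane $(T_xM,F_x)$; a nontrivial connected subgroup there forces $F_x$ to be Euclidean, and otherwise Ambrose--Singer kills the curvature and gives the locally Minkowskian conclusion. The only points you flag as needing care are real but routine: the classification of connected subgroups of $GL(2,\mathbb{R})$ preserving a bounded convex body, and the fact that for a Berwald spray $R^i_{\ k}=0$ forces the full linear curvature $R^i_{\ jkl}$ to vanish (this follows from the first Bianchi identity together with the antisymmetries, and is not special to dimension two).

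The paper does something else entirely. Its proof of this statement (Corollary~\ref{CSZ}) is a corollary of the identity ${\bf R}_y({\bf I}_y)=0$, obtained by specializing the general relation
\[
J_{k|m}y^m + I_m R^m_{\ k} = {\bf S}_{\cdot k|m}y^m - {\bf S}_{|k}
\]
to a Berwald metric, where ${\bf S}=0$ and ${\bf J}=0$. On a surface one has $R^i_{\ m}={\bf K}F^2 h^i_{\ m}$, so ${\bf R}_y({\bf I}_y)={\bf K}F^2{\bf I}_y$ and hence ${\bf K}\,{\bf I}=0$. Akbar-Zadeh then gives ${\bf K}={\bf K}(x)$, and Deicke's theorem finishes. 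Your argument is more geometric, handles the full dichotomy (the ``more precisely'' clause) cleanly via discreteness of holonomy, and needs no curvature identities or Deicke; the paper's argument is entirely analytic, avoids holonomy theory altogether, and---this is its real payoff---extends verbatim to all dimensions without any scalar-curvature hypothesis, which is the content of their Theorem~\ref{MTHM}.
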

By Numata and  Szab\'{o} rigidity theorems, one can find that every $n$-dimensional Berwald manifold of scalar flag curvature is Riemannian (if ${\bf K}$ is nonzero) or locally Minkowskian (if ${\bf K}$ vanishes). However, Finsler metrics of scalar flag curvature  are special Finsler metrics.  In this paper, we remove the mentioned restrictions on Finsler manifolds and prove the following rigidity result.
\begin{thm}\label{MTHM}
Let $(M, F)$ be a Berwald manifold. Then, the following hold
\begin{itemize}
\item[1.] If\ \ ${\bf K}(P, y)$ vansihes for all flags and flag-poles, then $F$ is locally Minkowskian;
\item[2.] If\ \ ${\bf K}(P, y)$ is no where zero, then $F$ is Riemannian metric.
\end{itemize}
\end{thm}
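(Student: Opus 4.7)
For Part 1, the hypothesis $\mathbf{K}(P,y) \equiv 0$ is the degenerate case of scalar flag curvature, with $\mathbf{K}(x,y)\equiv 0$. Theorem~\ref{Num} (if $\dim M\geq 3$) and Theorem~\ref{Szabo} (if $\dim M = 2$) then yield directly that $F$ is locally Minkowskian, with no further work required.

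For Part 2 the strategy is to combine Szab\'o's structure theorem---that the Berwald connection $\nabla$ of $F$ is the Levi-Civita connection of some Riemannian metric $h$ on $M$---with the Berger/de~Rham classification of Riemannian holonomy. Since the Jacobi endomorphism $R_y$ depends only on $\nabla$ we have $R_y^F = R_y^h$, and parallel translation by $\nabla$ preserves both $F$ and $h$, so the restricted holonomy $\mathrm{Hol}_{x_0}(\nabla)$ lies inside $O(h_{x_0})$ and simultaneously inside the linear isometry group of the Minkowski norm $F_{x_0}$. I then case-split on the holonomy type of $(M,h)$ at $x_0$. If $\mathrm{Hol}_{x_0}$ is trivial (flat case), $R\equiv 0$ and $\mathbf{K}\equiv 0$, a contradiction. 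If it is reducible, $(M,h)$ de~Rham-decomposes locally as $(M_1,h_1)\times(M_2,h_2)$; choosing nonzero $y \in T_{x_0}M_1$ and $u\in T_{x_0}M_2$ and using the block structure of the Riemann tensor of a product gives $R_y u = 0$, so $\mathbf{K}$ vanishes on the flag spanned by $y,u$, again a contradiction. If $\mathrm{Hol}_{x_0}$ is irreducible and $(M,h)$ is locally symmetric of rank $\geq 2$, picking linearly independent $y,u$ in a common Cartan subspace yields $R(u,y)y=0$ since the subspace is abelian, yet another contradiction. In the remaining cases---the generic Berger holonomies $SO(n)$, $U(m)$, $SU(m)$, $Sp(m)\cdot Sp(1)$, $Sp(m)$, $G_2$, $\mathrm{Spin}(7)$, and the isotropy groups of the rank-one symmetric spaces $S^n$, $\mathbb{RP}^n$, $\mathbb{CP}^n$, $\mathbb{HP}^n$, $\mathbb{OP}^2$ (with holonomies $SO(n)$, $U(m)$, $Sp(m)\cdot Sp(1)$, $\mathrm{Spin}(9)$) and their non-compact duals---the holonomy acts transitively on the unit $h$-sphere in $T_{x_0}M$. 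Consequently the invariant Minkowski norm $F_{x_0}$ is constant on this sphere, and by $1$-homogeneity it must be a scalar multiple of $\sqrt{h_{x_0}(\cdot,\cdot)}$; since this holds at every $x_0$, $F$ is Riemannian.

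The main obstacle I foresee is assembling the holonomy-classification step cleanly---in particular, verifying uniformly that every admissible holonomy group acts transitively on the $h$-sphere, and handling the rank-one symmetric cases (where the isotropy, not a Berger-list subgroup in the case of $\mathbb{OP}^2$, must still be shown to act transitively on $S^{15}$). A more computational alternative, closer in spirit to the tensorial tools listed in the abstract, would bypass holonomy and work directly with the Cartan tensor $C$: in a Berwald space $\nabla C = 0$, and by combining this with the identity relating the $y$-derivative of the Jacobi tensor to $C\cdot R$, the vanishing of the mean Landsberg tensor $J$, and the nowhere-zero hypothesis on $R_y$, one should be able to force $C\equiv 0$ pointwise and hence conclude that $F$ is Riemannian.
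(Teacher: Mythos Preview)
Your handling of Part~1 matches the paper's (it is the well-known flat-Berwald case). For Part~2 your holonomy approach is correct: the case split via de~Rham decomposition, Berger's list, and the rank of the symmetric factor does exhaust all possibilities; each reducible or higher-rank case produces a vector $u$ with $R_y u = 0$ and hence a flag with $\mathbf{K}=0$, while in the remaining sphere-transitive cases the invariance of $F_{x_0}$ under $\mathrm{Hol}_{x_0}$ forces it to be a scalar multiple of $\sqrt{h_{x_0}}$. This is essentially a reconstruction of Szab\'o's full structure theorem for Berwald spaces, and it imports substantial machinery (Berger's classification, symmetric-space theory, the $\mathrm{Spin}(9)$-transitivity on $S^{15}$).

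The paper instead takes the short tensorial route you allude to at the end, but with the \emph{mean} Cartan torsion $\mathbf{I}$ rather than the full $\mathbf{C}$. From the general identity
\[
J_{k|m}y^m + I_m R^m_{\ k} \;=\; {\bf S}_{\cdot k|m}y^m - {\bf S}_{|k},
\]
together with the Berwald conditions $\mathbf{J}=0$ and $\mathbf{S}=0$, one obtains immediately $\mathbf{R}_y(\mathbf{I}_y)=0$. Since $\mathbf{I}_y$ is $\mathbf{g}_y$-orthogonal to $y$, whenever $\mathbf{I}_y\neq 0$ the flag $P_0=\mathrm{span}\{y,\mathbf{I}_y\}$ has $\mathbf{K}(P_0,y)=0$, contradicting the hypothesis; hence $\mathbf{I}\equiv 0$, and Deicke's theorem gives that $F$ is Riemannian. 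This argument is a few lines, requires no classification results, and works uniformly in all dimensions. Your route, by contrast, offers a structural explanation---tying the rigidity to the sphere-transitivity of irreducible Riemannian holonomies---at the cost of considerably heavier prerequisites.
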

Here, we highlight two important remarks. First, Theorem \ref{MTHM}  gives a natural and important extension of theorems \ref{Num} and \ref{Szabo}. Second, as an application of Theorem \ref{MTHM}, we give a short and smart proof for Szab\'{o}  Theorem \ref{Szabo}. For more details, see Corollary \ref{CSZ}.

\bigskip

The well-known  Hilbert's fourth problem asks to construct all metrics on $n$-dimensional Euclidean space $\mathbb{R}^n$ such that the straight line segment is the shortest curve joining two points. The beautiful and important solutions for Hilbert's problem were constructed by Funk in \cite{Fk2}, namely Funk metrics, which are  the solutions of PDEs $F_{x^k}=FF_{y^k}$. Regarding the Berwald curvature of Funk metric, Cheng-Shen introduced the notion of isotropic Berwald metrics \cite{CS}. A Finsler metric $F$ is called a isotropic Berwald metric  if its Berwald curvature is given by
\begin{eqnarray}\label{IBCurve}
{\bf B}_y(u,v,w)=\Phi F^{-1}\Big\{{\bf h}(u,v)\mathfrak{B}_y(w)+{\bf h}(v, w)\mathfrak{B}_y(u) +{\bf h}(w, u)\mathfrak{B}_y(v)+2F{\bf C}_y(u, v, w) \ell\Big\},
\end{eqnarray}
where $\Phi\in C^{\infty}(M)$,  $\ell=\ell(x, y)=\ell^i\partial/\partial x^i$ denotes the distinguished section of $\pi^*TM$, $\mathfrak{B}_y(w):=w-\textbf{g}_y(w,\ell)\ell$   and ${\bf h}_y(u,v)={\bf g}_y(u,v)-F^{-2}(y){\bf g}_y(y,u){\bf g}_y(y,v)$  is  the angular form in direction $y$. If $\Phi=constant$, then $F$ is called constant isotropic Berwald metric.  Berwald metrics are trivially isotropic Berwald metrics with $\Phi=0$. Funk metrics are also non-trivial isotropic Berwald metrics  with $\Phi=1/2$. In \cite{TN}, the authors studied  isotropic Berwald metrics of scalar flag curvature and proved the following.
\begin{propriete}{\rm (\cite{TN})}\label{TNX}
Let $(M, F)$ be an isotropic Berwald manifold of dimension $n\geq 3$. Suppose that $F$ is of scalar flag curvature. Then $F$ is a Randers metric.
\end{propriete}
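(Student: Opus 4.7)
The plan is to reduce the statement to the Matsumoto--H\={o}j\={o} classification of $C$-reducible Finsler metrics in dimension $n \ge 3$, by combining the structural consequences of the isotropic Berwald identity (\ref{IBCurve}) with a Bianchi-type identity available under scalar flag curvature.

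First I would extract the Landsberg and mean Berwald curvatures from (\ref{IBCurve}). Contract (\ref{IBCurve}) with ${\bf g}_y(\cdot,\ell)$; using both ${\bf g}_y(\ell,\mathfrak{B}_y(w))=0$ (which follows at once from the definition of $\mathfrak{B}_y$) and ${\bf g}_y(\ell,\ell)=1$, the first three terms on the right drop out and one is left with ${\bf g}_y(\ell,{\bf B}_y(u,v,w))=2\Phi\,{\bf C}_y(u,v,w)$. Combining this with the defining formula $L_{ijk}=-\tfrac{1}{2}y_l B^l{}_{ijk}$ yields the key identity $L=-\Phi F\,C$: the Landsberg tensor is everywhere proportional to the Cartan tensor, and taking a further $g^{jk}$-trace gives $J=-\Phi F\,I$. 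A separate trace of (\ref{IBCurve}) itself produces the mean Berwald curvature $E=\tfrac{n+1}{2}\Phi F^{-1}h$, so $F$ has isotropic $S$-curvature ${\bf S}=(n+1)\Phi F$; in particular, all three ``mean'' non-Riemannian invariants are controlled by the single function $\Phi=\Phi(x)$.

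Second, I would invoke the identity available for any Finsler metric of scalar flag curvature. When ${\bf K}={\bf K}(x,y)$ the Riemann tensor takes the scalar form $R^i{}_k={\bf K}(F^2\delta^i_k-F F_{y^k}y^i)$, and feeding this into the second Bianchi identity produces a relation of the schematic type
$$L_{ijk|s}\,y^s+{\bf K}F^2 C_{ijk}=\tfrac{F^2}{3}\bigl\{{\bf K}_{y^i}h_{jk}+{\bf K}_{y^j}h_{ki}+{\bf K}_{y^k}h_{ij}\bigr\}.$$
Substituting $L=-\Phi F\,C$ and using $\Phi=\Phi(x)$, the left-hand side reduces to an expression in $C$, $C_{ijk|s}y^s$ and $h$ only; taking the $h$-trace and reinserting $J=-\Phi F\,I$ should eliminate the horizontal-derivative term and isolate the $C$-reducibility relation
$$(n+1)C_{ijk}=h_{ij}I_k+h_{jk}I_i+h_{ki}I_j.$$

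Once $F$ is shown to be $C$-reducible in dimension $n \ge 3$, the Matsumoto--H\={o}j\={o} theorem forces $F$ to be either a Randers metric or a Kropina metric, and since Kropina metrics are singular on the kernel of $\beta$ the regular-Finsler hypothesis leaves only the Randers case. The main technical obstacle is the second step: the trace manipulation must be carried out carefully enough to produce genuine $C$-reducibility rather than a strictly weaker tensorial consequence, and the hypothesis $n \ge 3$ is used precisely to ensure that the $h$-traces involved are non-degenerate. The degenerate sub-case $\Phi\equiv 0$ (true Berwald) would be handled separately by Theorem \ref{Num}, producing a Riemannian or locally Minkowskian metric, both of which are trivially Randers metrics.
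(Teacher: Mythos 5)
The statement you are proving is cited from~\cite{TN} and the present paper does not reproduce its proof, so I can only assess your argument on its own terms and against the fragments of the method that do appear here (in particular the identity ${\bf L}=c F\,{\bf C}$ extracted in the proof of Theorem~\ref{MTHM2}, which is the same identity you derive in Step~1, modulo a sign that your computation actually gets right given the definition ${\bf L}_y(u,v,w)=-\tfrac12{\bf g}_y({\bf B}_y(u,v,w),y)$).

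Your Step~1 is correct and complete: contracting \eqref{IBCurve} with ${\bf g}_y(\ell,\cdot)$ does kill the $\mathfrak{B}_y$-terms and yields $L_{ijk}=-\Phi F\,C_{ijk}$, $J_k=-\Phi F\,I_k$, and ${\bf E}=\tfrac{n+1}{2}\Phi F^{-1}{\bf h}$, hence ${\bf S}=(n+1)\Phi F$. Your Step~3 (Matsumoto--H\={o}j\={o} plus regularity ruling out Kropina) is also correct once $C$-reducibility is in hand.

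The gap is in Step~2. Write $\Phi_0:=\Phi_{x^m}y^m$. Substituting $L=-\Phi F\,C$, $C_{ijk|m}y^m=L_{ijk}$ and $F_{|m}=0$ into the scalar-curvature Bianchi identity turns the left side into $\bigl(\Phi^2F^2-\Phi_0 F+{\bf K}F^2\bigr)C_{ijk}$, and the $g^{jk}$-trace gives $\Lambda\,I_k=-\tfrac{n+1}{3}{\bf K}_{\cdot k}$ with $\Lambda:=\Phi^2-\Phi_0/F+{\bf K}$. Reinserting this eliminates ${\bf K}_{\cdot k}$ and yields $\Lambda\,M_{ijk}=0$, where $M$ is the Matsumoto torsion. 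This gives $C$-reducibility only where $\Lambda\neq 0$. The degenerate locus $\Lambda\equiv 0$ cannot simply be dismissed: differentiating $\Lambda\equiv0$ in $y$ and using ${\bf K}_{\cdot k}=0$ forces $\Phi$ to be constant and ${\bf K}\equiv-\Phi^2$, and this case genuinely occurs --- the Funk metric has $\Phi=\tfrac12$ and ${\bf K}=-\tfrac14$, exactly on the degenerate locus. So for precisely the metrics the theorem is designed to capture, your trace argument returns $0=0$ and gives no information about $M_{ijk}$. A separate argument is required there (e.g.\ via the Douglas structure of isotropic Berwald metrics and the classification of projectively flat metrics of constant flag curvature), and your sketch does not supply one.

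A second, smaller error: you dispose of the sub-case $\Phi\equiv 0$ by saying Numata's theorem yields a Riemannian or locally Minkowskian metric, ``both of which are trivially Randers.'' A general locally Minkowskian metric is \emph{not} a Randers metric --- a Minkowski norm need not be of the form $\sqrt{a_{ij}y^iy^j}+b_iy^i$. So the $\Phi\equiv 0$, ${\bf K}\equiv 0$ branch either needs to be excluded by hypothesis or addressed honestly rather than waved away.
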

In this paper, we exclude the class of Berwlad manifolds and consider the class of positively curved non-zero constant isotropic Berwald manifolds. Then, prove the following.

\begin{thm}\label{MTHM2}
Let $(M, F)$ be a positively curved non-zero constant isotropic Berwald manifold. Then, $F$ is Riemannian metric.
\end{thm}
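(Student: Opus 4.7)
\emph{Proof plan.} The plan is to reduce to Theorem \ref{MTHM}: I would show that the hypotheses force the Cartan tensor $\mathbf{C}$ to vanish, which directly gives that $F$ is Riemannian. (Equivalently, once $\mathbf{C}=0$, substituting back into (\ref{IBCurve}) shows that for $n\ge 2$ the remaining ``$\mathbf{h}$-terms'' in the braces do not vanish identically, so $\Phi$ must then be zero, hence $\mathbf{B}=0$, and Theorem \ref{MTHM} applies since $\mathbf{K}>0$ everywhere.)

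First, I would extract the Landsberg tensor from (\ref{IBCurve}). Contracting the formula with $y$ through the fundamental tensor $\mathbf{g}_y$, three facts collapse the right-hand side: $\mathbf{g}_y(y,\mathfrak{B}_y(w))=0$ because $\mathfrak{B}_y$ projects onto $\ell^{\perp}$; $\mathbf{C}_y(y,\cdot,\cdot)=0$ by homogeneity of the Cartan tensor; and $\mathbf{g}_y(y,\ell)=F$. The three ``$\mathbf{h}$-terms'' therefore drop out, only the Cartan contribution survives, and one reads off the identity
\[
\mathbf{L}\;=\;-\Phi F\,\mathbf{C}.
\]
Since $\Phi$ is a constant and $F$ is horizontally $y$-parallel in the Berwald connection, differentiating this along $y$ and using $\mathbf{L}=\mathbf{C}_{|m}y^m$ gives
\[
\mathbf{C}_{|m|n}\,y^m y^n\;=\;-\Phi F\,\mathbf{L}\;=\;\Phi^{2} F^{2}\,\mathbf{C}.
\]

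Next I would compare the left-hand side with the Ricci/Bianchi commutation formula for the Berwald $h$-connection, which expresses the second horizontal derivative of $\mathbf{C}$ along $y$ in terms of the $y$-reduced Riemann tensor $R^h{}_k=R^h{}_{jkl}y^jy^l$ acting on $\mathbf{C}$. After performing the correct contractions, the resulting identity takes the schematic form $\mathbf{C}_{|m|n}y^m y^n \;=\; -(\text{flag-curvature terms})\,\mathbf{C}$, where the flag-curvature terms are positive combinations of $\mathbf{K}(P,y)$ over two-planes $P$ containing $y$. Comparing with the previous display yields $\bigl(\Phi^{2} F^{2}+(\text{positive flag-curvature terms})\bigr)\mathbf{C}=0$, and because $\mathbf{K}(P,y)>0$ for every flag and every flag-pole the coefficient is strictly positive; hence $\mathbf{C}\equiv 0$ and $F$ is Riemannian.

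The hard part will be the Ricci/Bianchi step: writing $\mathbf{C}_{|m|n}y^m y^n$ out in the Berwald $h$-connection produces several correction terms involving $y$-derivatives of $R^h{}_k$ and the non-linear connection coefficients $N^h{}_k$, and careful bookkeeping is required in order to re-express the result as a sign-definite combination of the flag curvatures rather than a tangle of mixed terms. Once that computation is in place, the final sign argument against the positivity of $\mathbf{K}$ closes the proof immediately.
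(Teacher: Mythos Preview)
Your opening reduction is fine and matches the paper: contracting \eqref{IBCurve} with $\mathbf g_y(y,\cdot)$ gives $\mathbf L=\pm\Phi F\,\mathbf C$, and one more horizontal derivative along $y$ yields $\mathbf C_{|m|n}y^m y^n=\Phi^2F^2\mathbf C$ (the sign is irrelevant after squaring). The difficulty is exactly where you flag it, and the plan you sketch for the ``hard part'' does not go through.

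The identity you are looking for is \eqref{Moeq1} in the paper:
\[
L_{ijk|m}y^m + C_{ijm}R^m_{\ k}
= -\tfrac{1}{3}g_{im}R^m_{\ k\cdot j}-\tfrac{1}{3}g_{jm}R^m_{\ k\cdot i}
  -\tfrac{1}{6}g_{im}R^m_{\ j\cdot k}-\tfrac{1}{6}g_{jm}R^m_{\ i\cdot k}.
\]
So $\mathbf C_{|m|n}y^my^n$ is \emph{not} equal to $-R\cdot\mathbf C$: the right-hand side consists of vertical derivatives $R^m_{\ k\cdot j}$ of the Riemann tensor, and these are not expressible as a sign-definite combination of flag curvatures under the sole hypothesis $\mathbf K>0$. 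There is no Ricci/Bianchi commutation that removes them, because $T_{|m|n}y^my^n$ is already symmetric in $m,n$; the commutator $T_{|m|n}-T_{|n|m}$ is what curvature controls, not the contraction with $y^my^n$. Hence the step ``re-express the result as a sign-definite combination of the flag curvatures'' cannot be carried out at the level of the full Cartan tensor.

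The paper avoids this obstruction by \emph{tracing} with $g^{ij}$ to pass to the mean tensors $\mathbf I$ and $\mathbf J$. After the trace the right-hand side becomes $-\tfrac{1}{3}\{2R^m_{\ k\cdot m}+R^m_{\ m\cdot k}\}$, and a separate identity \eqref{SSKKMo} rewrites this as $\mathbf S_{\cdot k|m}y^m-\mathbf S_{|k}$. Because a constant isotropic Berwald metric has $\mathbf S=(n+1)cF$ with $c$ constant and $F_{|m}=F_{\cdot k|m}=0$, this $S$-curvature expression vanishes identically. One is then left with the clean vector equation
\[
c^2F^2\,\mathbf I_y+\mathbf R_y(\mathbf I_y)=0,
\]
and pairing with $\mathbf I_y$ gives $\mathbf K(P_0,y)=-c^2<0$ for $P_0=\mathrm{span}\{\mathbf I_y,y\}$ whenever $\mathbf I_y\neq0$, contradicting positive curvature. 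Deicke's theorem then finishes. In short: the missing idea is to drop from $\mathbf C$ to $\mathbf I$ so that the unmanageable $R^m_{\ k\cdot j}$ terms can be traded for $S$-curvature, which the hypothesis controls.
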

Theorem \ref{MTHM2} can be considered as an extension of Theorem \ref{TNX}. Also, we remark that Theorem \ref{MTHM2} does not hold for constant isotropic Berwald manifold of negative flag curvature. Here, we  give an example of $n$-dimensional  constant isotropic  Berwald manifold with negative constant flag curvature  that is not Riemannian.
\begin{ex}
As we mentioned, a Finsler metric $F=F(x, y)$ satisfying $F_{x^k}=FF_{y^k}$ is called a Funk metric. The standard Funk metric on the
Euclidean unit ball $\mathbb{B}^n(1)$ is denoted by $\Theta$ and defined by
\begin{equation}\label{Funk}
\Theta(x,y):=\frac{\sqrt{|y|^2-(|x|^2|y|^2-\langle x, y \rangle^2)}}{1-|x|^2}+\frac{\langle x, y \rangle}{1-|x|^2}, \,\,\,\,\,y\in T_x\mathbb{B}^n(1)\simeq \mathbb{R}^n,
\end{equation}
where $\langle, \rangle$ and $| . |$ denote the Euclidean inner product and norm on $\mathbb{R}^n$, respectively. The spray coefficients of  $\Theta$  are given by
\[
G^i=\frac{1}{2} \Theta y^i.
\]
Then, Funk metrics are of constant isotropic  Berwald curvature $\Phi=1/2$. Theses metrics have  negative constant flag curvature ${\bf K}=-1/4$. Funk metrics are of Randers-type Finsler metrics which are not Riemannian.
\end{ex}

\bigskip

Let $(M, F)$ be an $n$-dimensional Finsler manifold. Then, $F$ is called of strictly positive (or negative) isotropic Berwald curvature if its satisfies \eqref{IBCurve}, where  $\Phi(x)\ge c_0>0$ (or $\Phi(x)\leq c_0<0$) on $M$. In this paper, we consider strictly positive (or negative) isotropic Berwald manifolds and prove the following.

\begin{thm}\label{MTHM2.5}
Let $(M,F)$ be a compact  Finsler manifold. Suppose that $F$ has strictly positive (or negative) isotropic Berwald curvature. Then $F$ is a Berwald metric.
\end{thm}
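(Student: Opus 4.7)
The plan is to show that the hypothesis forces the mean Cartan tensor ${\bf I}$ to vanish identically on $SM$, whereupon Deicke's theorem makes $F$ Riemannian and in particular Berwald. The first step is to extract a pointwise relation between the mean Landsberg and mean Cartan tensors. Contracting the isotropic Berwald identity \eqref{IBCurve} with the flagpole $y$, and using ${\bf g}_y(\ell,y)=F$ together with ${\bf g}_y(\mathfrak{B}_y(w),y)={\bf g}_y(w,y)-F\,{\bf g}_y(w,\ell)=0$, the three ``$\mathfrak{B}$-terms'' drop out and only the summand $2F\,{\bf C}_y\,\ell$ survives. By the definition ${\bf L}_y(u,v,w)=-\tfrac12\,{\bf g}_y({\bf B}_y(u,v,w),y)$ of the Landsberg tensor, this yields ${\bf L}=-F\Phi\,{\bf C}$, and tracing with ${\bf g}^{-1}$ gives the key pointwise identity ${\bf J}=-F\Phi\,{\bf I}$ between the mean Landsberg tensor ${\bf J}$ and the mean Cartan tensor ${\bf I}$.

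Second, I would combine this with the standard Chern-connection identities ${\bf J}_i={\bf I}_{i|m}y^m$ and $g_{ij|k}=0$ (horizontal metric compatibility) to compute the action of the geodesic spray $X=y^i\pa/\pa x^i-2G^i\pa/\pa y^i$ on the genuine scalar function $|{\bf I}|^2:=g^{ij}{\bf I}_i{\bf I}_j$ on the compact sphere bundle $SM$. This gives the pointwise identity
\[
X(|{\bf I}|^2)=2g^{ij}{\bf I}_i{\bf I}_{j|m}y^m=2g^{ij}{\bf I}_i{\bf J}_j=-2F\Phi\,|{\bf I}|^2=-2\Phi\,|{\bf I}|^2,
\]
the last equality using $F\equiv1$ on $SM$.

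Third, I would invoke Liouville's theorem for the geodesic flow on the compact contact manifold $(SM,\alpha)$: the flow preserves the canonical measure $d\mu=\alpha\wedge(d\alpha)^{n-1}$, so $\int_{SM}X(f)\,d\mu=0$ for every smooth $f\in C^\infty(SM)$. Applied to $f=|{\bf I}|^2$, this yields
\[
\int_{SM}\Phi\,|{\bf I}|^2\,d\mu=0.
\]
Under $\Phi(x)\ge c_0>0$ the integrand is non-negative, which forces ${\bf I}\equiv 0$; by Deicke's theorem $F$ is Riemannian, and so in particular a Berwald metric. The case $\Phi\le c_0<0$ is symmetric: the integrand is non-positive and the same conclusion follows.

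The main obstacle is Step~2: verifying the precise sign conventions for the Chern-connection identities $g_{ij|k}=0$ and ${\bf J}_i={\bf I}_{i|m}y^m$, and ensuring $|{\bf I}|^2$ is treated as a bona fide smooth function on $SM$ (rather than a tensor) so that Liouville's theorem applies without spurious connection terms. An entirely elementary alternative bypasses the Chern identities: integrate the ODE $\tfrac{d}{dt}|{\bf I}|^2=-2\Phi\,|{\bf I}|^2$ along each geodesic on the compact $SM$ to obtain exponential decay $|{\bf I}|^2(\phi_t(x,y))\le|{\bf I}|^2(x,y)\,e^{-2c_0 t}$, and then invoke Poincar\'e recurrence for the measure-preserving flow: the full-measure set of recurrent points must satisfy $|{\bf I}|=0$, and continuity extends this to all of $SM$.
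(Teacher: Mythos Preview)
Your argument is correct, but it is not the paper's argument. The paper never touches the Cartan or Landsberg tensors here: it observes that an isotropic Berwald metric has isotropic $S$-curvature ${\bf S}=(n+1)\Phi F$, so along any unit-speed geodesic the distortion satisfies $\tfrac{d}{dt}\tau(\gamma,\dot\gamma)=(n+1)\Phi(\gamma)\ge (n+1)c_0$; since $\tau$ is bounded on the compact indicatrix bundle $SM$, letting $t\to\infty$ gives a contradiction, forcing $\Phi\equiv 0$ and hence ${\bf B}=0$. Your route instead exploits the contracted identity ${\bf J}=-\Phi F\,{\bf I}$ to get $X(|{\bf I}|^2)=-2\Phi|{\bf I}|^2$ on $SM$, and then either Liouville's theorem or Poincar\'e recurrence for the measure-preserving geodesic flow kills $|{\bf I}|^2$; Deicke then gives Riemannian, hence Berwald. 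Both hinge on compactness of $SM$ and growth/decay along the flow, but the monotone quantity is different: $\tau$ for the paper, $|{\bf I}|^2$ for you. The paper's proof is shorter and uses less machinery (no Liouville, no Deicke), and it transparently yields the Corollary that any compact isotropic Berwald manifold is Berwald. Your proof costs a bit more but delivers a sharper endpoint, namely that under the stated hypothesis $F$ would have to be Riemannian (which is in fact incompatible with $\Phi\ne 0$, so the hypothesis is vacuous); your worries about connection conventions are harmless, since the discrepancy between the Chern and Berwald horizontal derivatives is an $L$-term that vanishes upon contraction with $y^m$, so $X(|{\bf I}|^2)=2\langle{\bf I},{\bf J}\rangle_g$ holds regardless.
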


\bigskip

For a  Finsler manifold $(M, F)$, Deng-Hou  proved that the group of isometries $I(M, F)$  is a Lie transformation group of  manifold $M$ which is useful for studying homogeneous Finsler manifold  \cite{D2}. Homogeneous Finsler manifolds are those Finsler manifolds $(M, F)$ that the orbit of the natural  action  of $I(M, F)$ on $M$ at any point of $M$ is the whole $M$.  In 2013, Deng-Hu investigated homogeneous Randers manifolds of Berwald-type and proved the following.
\begin{propriete}{\rm (Deng-Hu  Theorem \cite{DH})}\label{DH}
Let $(M, F)$ be a homogeneous Randers space of Berwald type. If the flag curvature of $F$ is no where zero, then $F$ is Riemannian.
\end{propriete}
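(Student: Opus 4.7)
The plan is to exploit the special structure of Randers metrics of Berwald type. Recall that a Randers metric $F=\alpha+\beta$ is of Berwald type if and only if $\beta$ is parallel with respect to the Levi-Civita connection $\nabla^\alpha$ of $\alpha$; in that case the Chern (hence Berwald) connection of $F$ coincides with $\nabla^\alpha$, so the Riemann curvature tensor $R^F$ entering the flag curvature of $F$ equals the Riemannian curvature tensor $R^\alpha$ of $\alpha$. From $\nabla^\alpha\beta=0$ one immediately deduces that $\|\beta\|_\alpha$ is constant on $M$; in particular the $\alpha$-dual vector field $b^\#$ is either identically zero---in which case $F=\alpha$ is already Riemannian and we are done---or it is nowhere zero on $M$.

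Assume for contradiction that $b^\#$ is nowhere zero. Since $\nabla^\alpha b^\#=0$, one has $R^\alpha(X,Y)b^\#=0$ for all vector fields $X,Y$. Fix $x\in M$, take the flag pole $y:=b^\#(x)$, and let $P\subset T_xM$ be any $2$-plane containing $y$ with transverse vector $u\in P$. The flag-curvature formula for a Berwald metric gives
\[
{\bf K}^F(P,y)=\frac{{\bf g}_y\bigl(R^\alpha(u,y)y,\,u\bigr)}{{\bf g}_y(y,y){\bf g}_y(u,u)-{\bf g}_y(y,u)^2},
\]
and the numerator vanishes by the identity $R^\alpha(u,b^\#)b^\#=0$. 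Hence ${\bf K}^F\bigl(P,b^\#(x)\bigr)=0$ at every $x\in M$, contradicting the hypothesis that ${\bf K}$ is nowhere zero. Therefore $\beta\equiv 0$ and $F$ is Riemannian.

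I expect the only delicate step to be the identification $R^F=R^\alpha$: one must recall that for a Randers metric of Berwald type the spray coefficients reduce to $G^i=\frac{1}{2}\Gamma^i_{jk}(x)y^jy^k$, where the $\Gamma^i_{jk}$ are the Christoffel symbols of $\alpha$, and then verify that the resulting Jacobi endomorphism $R_y$ agrees with the Riemannian map $u\mapsto R^\alpha(u,y)y$. Once this bookkeeping is done the remainder is purely algebraic. Notably, the argument does not use homogeneity at any point; one could equally well deduce the conclusion from Theorem \ref{MTHM} proved earlier in the paper, since every Randers metric of Berwald type is a Berwald metric and Theorem \ref{MTHM} establishes exactly the required dichotomy without any hypothesis on the isometry group.
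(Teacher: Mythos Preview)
The paper does not supply its own proof of Theorem~\ref{DH}; it is quoted as a known result from \cite{DH}. What the paper does prove is the more general Theorem~\ref{MTHM}, which contains Theorem~\ref{DH} as an immediate special case (a Randers metric of Berwald type is in particular a Berwald metric), exactly as you observe in your final paragraph.

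Your direct argument is correct and takes a genuinely different route from the paper's proof of Theorem~\ref{MTHM}. The paper works for an arbitrary Berwald metric by combining the identity
\[
J_{k|m}y^m + I_m R^m_{\ k} = {\bf S}_{\cdot k|m}y^m - {\bf S}_{|k}
\]
with ${\bf J}=0$ and ${\bf S}=0$ to obtain ${\bf R}_y({\bf I}_y)=0$, and then appeals to Deicke's theorem to conclude ${\bf I}=0$. Your approach instead exploits the Randers structure: since $\nabla^\alpha b^\#=0$, the dual vector field $b^\#$ is parallel, whence $R^\alpha(u,b^\#)b^\#=0$ for every $u$; taking $y=b^\#$ as flagpole produces a flag of zero curvature directly. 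This is more elementary---no Deicke theorem, no $S$-curvature identities---but is of course limited to the Randers case, whereas the paper's argument handles all Berwald metrics at once. Your observation that homogeneity plays no role is entirely correct; the hypothesis is superfluous, and indeed the paper's Theorem~\ref{MTHM} dispenses with it as well.
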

By considering Theorems \ref{DH} and \ref{MTHM2}, it is a nature problem that whether there are  non-Berwald homogeneous Finsler metrics with isotropic Berwald curvature or not. Then, we give a generalization of Deng-Hu  Theorem \ref{DH}. More precisely, we prove the following.
\begin{thm}\label{TN}
Every   homogeneous  isotropic Berwald   metric on a connected manifold $M$ is  either  locally Minkowskian, or Riemannian, or Berwald metric or Randers metric of Berwald-type.
\end{thm}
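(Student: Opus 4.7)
The plan is to leverage homogeneity to reduce the coefficient $\Phi$ in \eqref{IBCurve} to a constant, then split into cases and appeal to the earlier rigidity results of this paper together with the Deng--Hu Theorem \ref{DH}.

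First, I would show that $\Phi$ is constant on $M$. All objects appearing in the defining identity \eqref{IBCurve}---the Berwald curvature ${\bf B}$, the Cartan tensor ${\bf C}$, the angular form ${\bf h}$, the distinguished section $\ell$, the auxiliary operator $\mathfrak{B}_y$ and $F$ itself---are intrinsic Finslerian tensors and are therefore preserved by every element of $I(M,F)$. This forces the scalar coefficient $\Phi$ to be invariant as well, and transitivity of $I(M,F)$ on the connected manifold $M$ then gives $\Phi\equiv c$ for some constant $c\in\mathbb{R}$. In particular, the $S$-curvature is isotropic with constant coefficient, $S=(n+1)cF$.

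If $c=0$ then $F$ is a Berwald metric, and Theorem \ref{MTHM} separates this into the locally Minkowskian case (${\bf K}\equiv 0$), the Riemannian case (${\bf K}$ nowhere zero) and the generic Berwald case, all three of which sit in the list of four classes claimed. If $c\neq 0$, I would use the characterisation that every non-trivial isotropic Berwald metric is of Randers type, so $F=\alpha+\beta$ and homogeneity of $F$ transfers to isometry-invariance of the navigation data $(\alpha,\beta)$. The one-form $\beta$ and the $(0,2)$-tensor $\nabla^{\alpha}\beta$ are then $I(M,F)$-invariant, and the constancy of $\Phi$, written as a pointwise equation on a Randers metric, should be promotable to $\nabla^{\alpha}\beta\equiv 0$, placing $F$ in the Berwald--Randers class; Theorem \ref{DH} then absorbs the remaining possibility ${\bf K}\neq 0$ into the Riemannian case.

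The main difficulty I foresee is the last implication: converting the scalar identity $S=(n+1)cF$, together with homogeneity, into the tensorial condition $\nabla^{\alpha}\beta=0$. Neither Theorem \ref{MTHM} nor Theorem \ref{MTHM2} applies directly---the former demands $F$ be Berwald, the latter a strict positivity of flag curvature---so the heart of the argument will live in the category of homogeneous Randers spaces. I expect the key input there to be the Deng--Hou description of $I(M,F)$ as a Lie transformation group, combined with the isotropic Berwald identity specialised to Randers form, and this is the step I would develop most carefully.
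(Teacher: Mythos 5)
Your first step---promoting $\Phi$ to a constant via isometry-invariance and transitivity---is sound but weaker than what the problem actually gives you. The decisive ingredient that the paper uses, and that your proposal misses, is the Xu--Deng theorem \cite{XD}: a \emph{homogeneous} Finsler metric with isotropic $S$-curvature necessarily has \emph{vanishing} $S$-curvature. Since any isotropic Berwald metric \eqref{IBCurve} satisfies ${\bf S}=(n+1)\Phi F$ (by \cite{TR}), homogeneity forces $\Phi\equiv 0$, not merely $\Phi\equiv c$. In other words, the case $c\neq 0$ that you anticipate as the ``heart of the argument'' is vacuous: there simply are no homogeneous isotropic Berwald metrics with $\Phi\neq 0$. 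Once $\Phi\equiv 0$, the metric is Berwald, and the paper finishes by Szab\'{o}'s theorem for surfaces and by citing \cite{TN2} for $n\geq 3$.

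The gap in your proposal is therefore not a technical detail you would ``develop most carefully''---it is a branch that cannot be closed as described and does not need to be opened at all. Concretely: (i) your appeal to ``every non-trivial isotropic Berwald metric is of Randers type'' is not available in the generality you assume---Theorem \ref{TNX} requires both $n\geq 3$ and scalar flag curvature, neither of which you have; (ii) even granting a Randers reduction, the passage from ${\bf S}=(n+1)cF$ with $c\neq 0$ to $\nabla^{\alpha}\beta\equiv 0$ is precisely what would have to be proved and is left unargued; and (iii) your proposal does not address the two-dimensional case at all, which the paper handles separately. Replacing your ``$\Phi$ is constant'' step with ``$\Phi\equiv 0$ by Xu--Deng'' collapses the entire case analysis and matches the paper's actual argument.
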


\section{Preliminary}
Let $M$ be an $n$-dimensional $C^{\infty}$ manifold, $TM=\bigcup_{x \in M}T_{x}M$ the tangent space and $TM_0:=TM-\{0\}$ the slit tangent space of $M$. A Finsler structure on manifold $M$ is a function $ F:TM\rightarrow [0,\infty )$ with the following properties: (i) $F$ is $C^\infty$ on $TM_0$; (ii) $F$ is positively 1-homogeneous on the fibers of tangent  bundle $TM$, i.e., $F(x,\lambda y)=\lambda F(x,y)$, $\forall \lambda>0$;  (iii)   The   quadratic form $\textbf{g}_y:T_xM \times T_xM\rightarrow \mathbb{R}$ is positive-definite on $T_xM$
\[
\textbf{g}_{y}(u,v):={1 \over 2}\frac{\partial^2}{\partial s\partial t} \Big[  F^2 (y+su+tv)\Big]_{s=t=0}, \ \
u,v\in T_xM.
\]
Then, the pair  $(M,F)$ is called a  Finsler manifold.

Let  $x\in M$ and $F_x:=F|_{T_xM}$.  To measure the non-Euclidean feature of $F_x$, one can define ${\bf C}_y:T_xM\times T_xM\times T_xM\rightarrow \mathbb{R}$ by
\[
{\bf C}_{y}(u,v,w):={1 \over 2} \frac{d}{dt}\Big[\textbf{g}_{y+tw}(u,v)
\Big]_{t=0}, \ \ u,v,w\in T_xM.
\]
The family ${\bf C}:=\{{\bf C}_y\}_{y\in TM_0}$  is called the Cartan torsion. It is well known that ${\bf{C}}=0$ if and
only if $F$ is Riemannian.

There is a weaker notion of Cartan torsion. For   $y\in T_x M_0$, define ${\bf I}_y:T_xM\rightarrow \mathbb{R}$ by
\[
{\bf I}_y(u):=\sum^n_{i=1}g^{ij}(y) {\bf C}_y(u, \partial_i, \partial_j),
\]
where $\{\partial_i\}$ is a basis for $T_xM$ at  $x\in M$. The family ${\bf I}:=\{{\bf I}_y\}_{y\in TM_0}$  is called the mean Cartan torsion.  By Deicke's  theorem, every positive-definite Finsler metric  $F$ is Riemannian  if and only if ${\bf I}_y=0$

\bigskip
Let $(M, F)$ be a  Finsler manifold. A global vector field ${\bf{G}}=y^i {{\partial} / {\partial x^i}}-2G^i(x,y){{\partial} /
{\partial y^i}}$ is
induced by a Finsler metric $F$, where $G^i$ are local functions on $TM$ given by
\[
G^i:=\frac{1}{4}g^{im}\Big\{\frac{\partial^2[F^2]}{\partial x^k
\partial y^m}y^k-\frac{\partial[F^2]}{\partial x^m}\Big\},\ \
y\in T_xM.
\]
We call $\bf{G}$  the  associated spray of $(M,F)$.

A natural volume form $dV_F = \sigma_F(x) dx^1 \cdots
dx^n$ of a Finsler metric $F$ on an $n$-dimensional manifold $M$ is defined by
\[
\sigma_F(x) := {{\rm Vol} (\Bbb B^n)
\over {\rm Vol} \Big \{ (y^i)\in \mathbb{R}^n \ \Big | \ F \big ( y^i
\pxi|_x \big ) < 1 \Big \} },
\]
where $\Bbb B^n=\{y\in\mathbb{R}^n|\,\, |y|<1\}$.  The S-curvature is defined by
\[
 {\bf S}({\bf y}) := {\pa G^i\over \pa y^i}(x,y) - y^i {\pa \over \pa x^i}
\Big [ \ln \sigma_F (x)\Big ],
\]
where ${\bf y}=y^i\partial/\partial x^i|_x\in T_xM$.  $F$ is called of  isotropic S-curvature if ${\bf S}= (n+1) c F$, where $c= c(x)$ is a scalar function and  on $M$.

\smallskip

Define ${\bf B}_y:T_xM\times T_xM \times T_xM\rightarrow T_xM$  by ${\bf B}_y(u, v, w):=B^i_{\ jkl}(y)u^jv^kw^l{{\partial }/ {\partial x^i}}|_x$, where
\[
B^i_{\ jkl}:={{\partial^3 G^i} \over {\partial y^j \partial y^k \partial y^l}}.
\]
$\bf B$ is called the Berwald curvature, and $F$ is called a Berwald metric if ${\bf{B}}=0$.

\bigskip
For $y\in T_xM$, define the Landsberg curvature ${\bf L}_y:T_xM\times T_xM \times T_xM\rightarrow \mathbb{R}$  by
\[
{\bf L}_y(u, v,w):=-\frac{1}{2}{\bf g}_y\big({\bf B}_y(u, v, w), y\big).
\]
${\bf L}_y(u, v, w)$ is symmetric in $u$, $v$, and $w$, and  ${\bf L}_y(y, v, w)=0$. $\bf L$ is called the
Landsberg curvature.
\bigskip

For $y\in T_xM$, define ${\bf J}_y:T_xM\rightarrow \mathbb{R}$  by ${\bf J}_y(u):=J_i(y) u^i$, where
\[
{\bf J}_y(u):=\sum^n_{i=1}g^{ij}(y) {\bf L}_y(u, \partial_i, \partial_j).
\]
By definition,   ${\bf J}_y(y)=0$. $\bf J$ is called the mean Landsberg curvature or J-curvature.
A Finsler metric $F$ is called a weakly Landsberg  metric if ${\bf J}_y=0$.

\bigskip

For a non-zero vector $y \in T_xM_{0}$, the Riemann curvature is a family of linear
transformation $\textbf{R}_y: T_xM \rightarrow T_xM$ with homogeneity ${\bf R}_{\lambda y}=\lambda^2 {\bf R}_y$,
$\forall \lambda>0$ which is defined by
$\textbf{R}_y(u):=R^i_{k}(y)u^k {\partial/ {\partial x^i}}$, where
\be
R^i_{k}(y)=2{\partial G^i \over {\partial x^k}}-{\partial^2 G^i \over
{{\partial x^j}{\partial y^k}}}y^j+2G^j{\partial^2 G^i \over
{{\partial y^j}{\partial y^k}}}-{\partial G^i \over {\partial
y^j}}{\partial G^j \over {\partial y^k}}.\label{Riemannx}
\ee
The family $\textbf{R}:=\{\textbf{R}_y\}_{y\in TM_0}$ is called the Riemann curvature.
\bigskip

For a flag $P:={\rm span}\{y, u\} \subset T_xM$ with the flagpole $y$, the flag curvature ${\bf
K}={\bf K}(P, y)$ is defined by
\be
{\bf K}(x, y, P):= {{\bf g}_y \big(u, {\bf R}_y(u)\big) \over {\bf g}_y(y, y) {\bf g}_y(u,u)
-{\bf g}_y(y, u)^2 }.\label{K}
\ee
The flag curvature ${\bf K}={\bf K}(x, y, P)$ is a function of tangent planes $P={\rm span}\{ y, v\}\subset T_xM$. This quantity tells us how curved the space is at a point.   A Finsler metric $F$ is of scalar flag curvature, if ${\bf K}(x, y, P)={\bf K}(x, y)$ is independent of $ P$. In this case, the flag curvature is just a scalar function on the tangent space of $M$.

\section{Proof of Theorem \ref{MTHM}}
Express the Riemann curvature by
${\bf R}= R^i_{\ k} \omega^k \otimes {\bf e}_i$.
Let
\be
R^i_{\ kl}:= {1\over 3} \Big \{\frac{\partial R^i_{\ k}}{\partial y^l} - \frac{\partial R^i_{\ l}}{\partial y^k} \Big \}.\label{Kikl}
\ee
In fact,  $R^i_{\ kl}$ and $L^i_{\ kl}:= g^{ij}L_{jkl}$ are determined by the following equation
\[
\Omega^i := d\omega^{n+i}-\omega^{n+j} \wedge \omega_j^{\ i}
= {1\over 2} R^i_{\  kl}\omega^k \wedge \omega^l - L^i_{\ kl} \omega^k \wedge \omega^{n+l}.
\]
In this paper, we use the Berwald connection  and the $h$- and $v$- covariant derivatives of a Finsler tensor field are denoted by `` $|$ " and ``, " respectively. In local coordinate system, the Berwald connection determined by following
\begin{eqnarray}
&&d\omega^i = \omega^j \wedge \omega^i_j,\\
&&dg_{ij}- g_{kj}\omega^k_i - g_{ik}\omega^k_j = -2L_{ijk}\omega^k + 2C_{ijk}\omega^{n+k}.\label{Berwaldms}
\end{eqnarray}
Thus
\[
g_{ij|k}= -2L_{ijk},   \ \ \ \ \ g_{ij,k}= 2C_{ijk}.
\]
Also, we have
\begin{equation}
\Omega^i_{\ j}=d\omega^i_{\ j}-\omega^k_{\ j}\wedge\omega^i_{\
k}=\frac{1}{2}R^i_{\ jkl}\omega^k\wedge\omega^l-B^i_{\
jkl}\omega^k\wedge\omega^{n+l}.\label{BCon}
  \end{equation}
Thus
\begin{eqnarray}
&&R^h_{\ ijk}+ R^h_{\ jki}+ R^h_{\ kij}=0,\label{xxxzzz}\\
&&B^h_{\ ijk}=B^h_{\ jki}=B^h_{\ kij}.
\end{eqnarray}
Differentiating (\ref{Berwaldms}) implies that
\begin{eqnarray}
\nonumber g_{pj}\Omega^p_i+g_{ip}\Omega^p_j=\!\!\!\!&-&\!\!\!\!\! 2L_{ijk|s}\omega^k\wedge\omega^s-2L_{ijk,s}\omega^k\wedge\omega^{n+s}\\
\!\!\!\!&-&\!\!\!\!\! 2C_{ijs|k}\omega^k\wedge\omega^{n+s}-2C_{ijs,k}\omega^{n+k}\wedge\omega^{n+s}-2C_{ijs}\Omega^s.\label{mohem2}
\end{eqnarray}
Putting (\ref{BCon}) in (\ref{mohem2}) gives us
\begin{eqnarray}
&&L_{ijk|l}-L_{ijl|k}=-\frac{1}{2}g_{pj}R^p_{i\ kl}-\frac{1}{2}g_{ip}R^p_{j\ kl}-C_{ijl}R^l_{\ kl}.\label{xxxyyy}
\end{eqnarray}
By contracting (\ref{xxxyyy}) with $y^l$, we get
\be
L_{ijk;m}y^m + C_{ijm}R^m_{\ \ k}
= - {1\over 2} g_{im} R^m_{\ \ kl\cdot j} y^l - {1\over 2} g_{jm} R^m_{\ \ kl\cdot j} y^l,\label{LCKK}
\ee
Plugging (\ref{Kikl}) into (\ref{LCKK}) yields
\begin{eqnarray}
L_{ijk|m}y^m + C_{ijm}R^m_{\ \ k}=   - {1\over 3}g_{im}R^m_{\ \ k\cdot j}
- {1\over 3} g_{jm} R^m_{\ \ k\cdot i}- {1\over 6} g_{im}R^m_{\ \ j\cdot k}
- {1\over 6} g_{jm}R^m_{\ \ i\cdot k}. \label{Moeq1}
\end{eqnarray}
For more details about the relation \eqref{Moeq1}, one can refer to \cite{MS}.

\bigskip

\noindent
{\bf Proof of Theorem \ref{MTHM}:} Since $ {\bf K}$ is no where zero and it is a continuous function, either ${\bf K} <0$ for all flags and flag-poles or ${\bf K} >0$  for all flags and flag-poles.  Assume that at a point $x\in M$ and for all flags and flag-poles, we have ${\bf K} <0$.

Now, by multiplying (\ref{Moeq1}) with $g^{ij}$  we get
\be
J_{k|m}y^m + I_mR^m_{\ k}  = -  {1\over 3} \Big \{ 2 R^m_{\ \; k\cdot m} + R^m_{\ \; m\cdot k} \Big \} \label{Moeq2}
\ee
Also, S-curvature satisfies the following equation
\be
{\bf S}_{\cdot k|m}y^m - {\bf S}_{| k}
= - {1\over 3} \Big \{ 2 R^m_{\ \; k\cdot m} + R^m_{\ \; m\cdot k} \Big \}.\label{SSKKMo}
\ee
For more details, see \cite{CMS}. Comparing (\ref{Moeq2}) and (\ref{SSKKMo}) yields
\be
J_{k|m}y^m +I_m R^m_{\ k} = {\bf S}_{\cdot k|m}y^m - {\bf S}_{|k}. \label{EIILJ}
\ee
See \cite{Shn}. We can rewrite \eqref{EIILJ} as follows
\be
I^i_{\ | p|q} y^py^q +  R^i_{\ m}I^m = g^{ik} \Big \{ {\bf S}_{\cdot k|m}y^m - {\bf S}_{|k} \Big \}. \label{EIILJ*}
\ee
By assumption, ${\bf B}=0$. Every Berwald metric satisfies ${\bf S}=0$ and ${\bf J}=0$. Then, \eqref{EIILJ*} reduces to
\be
{\bf R}_y({\bf I}_y)=0. \label{RI}
\ee
Since ${\bf I}_y$ is orthogonal to $y$ with respect to ${\bf g}_y$, then by \eqref{RI}, it follows that  ${\bf K}(P_0, y)=0$, where
$P_0= {\rm span} \{{\bf I}_y, y\}$ whenever ${\bf I}_y \not=0$. But by assumption, we have ${\bf K}(P_0, y)<0$. This contradiction yields that   ${\bf I}_y =0$ for all $y\in T_xM_0$.

The same argument works for the case ${\bf K} > 0$. Now, Deicke's theorem infers $F$ is Riemannian.
\qed

\bigskip

\begin{cor}{\rm (Szab\'{o} Rigidity Theorem)}\label{CSZ}
Let $(M, F)$ be a Berwald surface. Then $F$ is Riemannian metric or locally Minkowskian.
\end{cor}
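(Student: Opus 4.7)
In $\dim M = 2$, any $2$-plane $P \subset T_xM$ containing $y \in T_xM_0$ must coincide with $T_xM$ itself, so the flag curvature reduces to a scalar function ${\bf K} = {\bf K}(x, y)$ on $TM_0$. The plan is to route the conclusion through Theorem~\ref{MTHM} in the two extreme subcases, and to exclude the intermediate subcase by a pointwise Berwald rigidity argument.

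If ${\bf K}$ vanishes identically on $TM_0$, Theorem~\ref{MTHM}(1) immediately yields that $F$ is locally Minkowskian. If ${\bf K}$ is nowhere zero on $TM_0$, Theorem~\ref{MTHM}(2) yields that $F$ is Riemannian. What remains is to rule out the intermediate possibility in which ${\bf K}(x_0, y_0) \neq 0$ at some point and ${\bf K}(x_1, y_1) = 0$ at another.

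Suppose ${\bf K}(x_0, y_0) \neq 0$. The pointwise core of the proof of Theorem~\ref{MTHM} --- namely the relation ${\bf R}_y({\bf I}_y) = 0$, which holds identically on $TM_0$ for a Berwald metric --- says that whenever ${\bf I}_{y_0} \neq 0$, the flag $P_0 = \mathrm{span}\{{\bf I}_{y_0}, y_0\}$ satisfies ${\bf K}(P_0, y_0) = 0$. In dimension two this flag is forced to be $T_{x_0}M$, so one arrives at ${\bf K}(x_0, y_0) = 0$, contradicting the hypothesis; hence ${\bf I}_{y_0} = 0$. By continuity of ${\bf K}$ and smoothness of ${\bf I}$, it follows that ${\bf I}_y = 0$ on an open cone of directions at $x_0$. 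Using that $F$ is Berwald --- so that ${\bf J} = 0$, meaning ${\bf I}$ is horizontally parallel along geodesics, and that tangent Minkowski spaces are mutually linearly isometric by Ichijy\={o}'s theorem (with these isometries preserving the mean Cartan torsion) --- together with the rigidity of $2$-dimensional Minkowski norms (vanishing of the mean Cartan on an open arc of the indicatrix forces vanishing on the whole indicatrix), I would conclude ${\bf I} \equiv 0$ on every fiber $T_xM_0$. Deicke's theorem then gives that $F$ is Riemannian, completing the dichotomy.

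The main obstacle is the last propagation step: justifying that the mean Cartan torsion of a $2$-dimensional Minkowski norm which vanishes on an open arc of directions must vanish on the entire indicatrix. This is precisely where the dimension-two hypothesis is essential, with no analogue in higher dimensions, and explains the role of the scalar-flag-curvature hypothesis in the original theorem of Numata.
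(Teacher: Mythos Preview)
Your argument has a genuine gap at exactly the step you flagged as the ``main obstacle.'' The claim that a two-dimensional Minkowski norm whose mean Cartan torsion vanishes on an open arc of directions must have identically vanishing mean Cartan torsion is false. One may take any smooth strongly convex closed curve that agrees with an ellipse on an open arc and deviates from it elsewhere; the resulting Minkowski norm has ${\bf C}$ (hence ${\bf I}$) vanishing on that arc but not globally. There is no analyticity hypothesis and no differential constraint on ${\bf I}$ along a single indicatrix that would force such propagation. Horizontal parallelism of ${\bf I}$ (from ${\bf J}=0$) and Ichijy\={o}'s linear isometries only transport the vanishing set from one fibre to another along the geodesic flow; they do not enlarge it within a fibre.

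The paper avoids this difficulty by inserting one additional ingredient you did not use: Akbar-Zadeh's theorem, which for a Berwald metric of scalar flag curvature forces ${\bf K}={\bf K}(x)$ to be independent of $y$. Once this is known, the identity ${\bf R}_y({\bf I}_y)=0$ in dimension two reads ${\bf K}(x)\,{\bf I}_y=0$, so at any point with ${\bf K}(x)\neq 0$ one gets ${\bf I}(x,y)=0$ for \emph{every} direction $y$ simultaneously, with no arc-to-fibre propagation needed. The dichotomy then closes immediately: either ${\bf K}\equiv 0$ and $F$ is locally Minkowskian, or some fibre is Euclidean and, since all tangent Minkowski spaces of a Berwald manifold are linearly isometric, $F$ is Riemannian. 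The step you are missing is precisely this reduction of ${\bf K}$ to a function of $x$ alone.
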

\begin{proof}
Let $F$ be a Berwald surface (or of scalar flag curvature), $R^i_{\ m}={\bf K}F^2h^i_m$. Then \eqref{RI} reduces to ${\bf K}I_i =0$. Since ${\bf B}=0$, then by Akbar-Zadeh theorem ${\bf K}={\bf K}(x)$ (see \cite{AZ}). Let  ${\bf K}(x)\neq 0$, $\forall x\in M$. Then ${\bf I}=0$ and $F$ is a Riemannian metric.  If $F$ is not Riemannian, then ${\bf K}(x)= 0$. Every Berwald metric with vanishing flag curvature is locally Minkowskian.
\end{proof}

\bigskip
Let $(M, F)$ be an $n$-dimensional  Finsler manifold.  Then  $F$ is called of almost isotropic S-curvature if ${\bf S}= (n+1) c F+\eta$, where $c= c(x)$ is a scalar function and  $\eta=\eta_i(x)y^i$ is a $1$-form on $M$. If $c=constant$, then $F$ is called of almost constant isotropic S-curvature.  Also, $F$ is called of isotropic mean Berwald curvature, if ${\bf E}=\frac{n+1}{2}cF^{-1}{\bf h}$, where $c= c(x)$ is a scalar function  on $M$. If $c=constant$, then $F$ is called of constant $E$-curvature. By definition, ${\bf E}=\frac{n+1}{2}cF^{-1}{\bf h}$ if and only if  ${\bf S}=(n+1)cF+\eta$. Then, $F$ has almost constant isotropic S-curvature if and only if it has constant $E$-curvature. In \cite{Wu},  Wu proved that any closed weakly Landsberg manifold with negative flag curvature is Riemannian.  It is interesting if one can replace the closeness condition with other curvature property.  By  relation \eqref{EIILJ*} used in proof of  Theorem \ref{MTHM}, we conclude the following.
\begin{cor}\label{cor-1}
Let $(M, F)$ be a weakly Landsberg  manifold. Suppose that $F$ has constant mean Berwald curvature. Then, the following hold
\begin{itemize}
\item[1.] If\ \ ${\bf K}(P, y)$ vansihes for all flags and flag-poles, then $F$ is locally Minkowskian;
\item[2.] If\ \ ${\bf K}(P, y)$ is no where zero, then $F$ is Riemannian metric.
\end{itemize}
\end{cor}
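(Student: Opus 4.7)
The plan is to follow the structure of the proof of Theorem \ref{MTHM}, applying identity \eqref{EIILJ*}
\[
I^i_{\ |p|q}y^py^q + R^i_{\ m}I^m = g^{ik}\bigl\{{\bf S}_{\cdot k|m}y^m - {\bf S}_{|k}\bigr\}
\]
but now under the two weaker hypotheses ``weakly Landsberg'' and ``constant mean Berwald curvature'' rather than the single hypothesis ``Berwald''. The aim is once again to force ${\bf g}_y({\bf I}_y,{\bf R}_y({\bf I}_y))=0$ so that the concluding flag-curvature contradiction of Theorem \ref{MTHM} can be run verbatim.

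The first task is to analyze each side of the identity. Weakly Landsberg $({\bf J}=0)$ together with the Berwald-connection identity relating $I_{i|k}y^k$ to $FJ_i$ yields $I_{i|k}y^k=0$; differentiating horizontally once more and contracting with $y^q$, while using $y^k_{|m}=0$ and $g^{ij}_{\ |k}y^k = 2L^{ij}_{\ \ k}y^k=0$ (the Landsberg tensor annihilates $y$), produces $I^i_{\ |p|q}y^py^q=0$. So the left-hand side collapses to ${\bf R}_y({\bf I}_y)$. On the right, constant mean Berwald means $c$ is a real constant in ${\bf E}=\tfrac{n+1}{2}cF^{-1}{\bf h}$, equivalently ${\bf S}=(n+1)cF+\eta$ with $\eta=\eta_i(x)y^i$ a 1-form, and the identities $F_{|k}=0$ and $\ell_{i|k}=0$ reduce ${\bf S}_{\cdot k|m}y^m - {\bf S}_{|k}$ to the purely $\eta$-part $(\eta_{k|m}-\eta_{m|k})y^m$. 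Combining with \eqref{SSKKMo} (which re-expresses the same scalar as a Riemann-curvature trace) and the orthogonality $I_iy^i=0$, the contraction of this residue with $I_i$ should drop out, giving ${\bf g}_y({\bf I}_y,{\bf R}_y({\bf I}_y))=0$.

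With this in hand, case 2 is immediate as in Theorem \ref{MTHM}: letting $P_0:=\mathrm{span}\{y,{\bf I}_y\}$, the identity forces ${\bf K}(P_0,y)=0$ whenever ${\bf I}_y\neq 0$, contradicting the nowhere-zero hypothesis, so ${\bf I}\equiv 0$ and Deicke's theorem identifies $F$ as Riemannian. For case 1 $({\bf K}\equiv 0)$ one has ${\bf R}\equiv 0$, and a supplementary argument is required to deduce ${\bf B}=0$ from ${\bf J}=0$, constant ${\bf E}$, and flatness; once Berwald is established, the classical fact ``Berwald plus flat equals locally Minkowskian'' finishes the proof. The principal obstacle I foresee lies in the middle step: constant ${\bf E}$ alone does not annihilate $\chi_k := {\bf S}_{\cdot k|m}y^m - {\bf S}_{|k}$ (the $d\eta$-contribution survives), so the argument must genuinely exploit the interplay between ${\bf J}=0$ and the 1-form $\eta$ to kill this residue after contraction with $I_i$; careful bookkeeping with the Berwald-connection identities $F_{|k}=0$, $\ell_{i|k}=0$, $g_{ij|k}=-2L_{ijk}$ is the heart of the proof.
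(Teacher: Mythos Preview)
Your plan coincides with the paper's: its entire argument for the corollary is the single sentence ``By relation \eqref{EIILJ*} used in proof of Theorem~\ref{MTHM}, we conclude the following.'' Your reductions are correct: under ${\bf J}=0$ the left side of \eqref{EIILJ} becomes $I_mR^m_{\ k}$, and under ${\bf S}=(n+1)cF+\eta$ with $c$ constant the right side becomes $\chi_k=(\eta_{k|m}-\eta_{m|k})y^m$, a $y$-linear expression governed by $d\eta$.

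The obstacle you flag at the end is genuine, and neither you nor the paper closes it. None of the tools you list --- the orthogonality $I_iy^i=0$, the Berwald-connection identities $F_{|k}=\ell_{i|k}=0$, $g_{ij|k}=-2L_{ijk}$, or the rewriting \eqref{SSKKMo} of $\chi_k$ as a Riemann-curvature trace --- forces $I^k\chi_k=0$: the antisymmetric tensor $\eta_{k|m}-\eta_{m|k}$ depends on $x$ alone and is completely decoupled from ${\bf I}$ and ${\bf J}$. Thus case~2 goes through cleanly only if one assumes $d\eta=0$, i.e.\ constant $S$-curvature rather than merely constant $E$-curvature; as literally stated, the hypothesis is too weak for the argument sketched. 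Your reservation about case~1 is likewise well placed: from ${\bf J}=0$, constant ${\bf E}$, and ${\bf R}=0$ there is no evident route to ${\bf B}=0$, and without Berwaldness the step ``flat Berwald $\Rightarrow$ locally Minkowskian'' is unavailable. The paper supplies no argument for either gap.
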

\section{Proof of Theorem \ref{MTHM2}}
In this section, we study positively curved non-zero constant isotropic Berwald manifolds and prove Theorem \ref{MTHM2}.

\bigskip
\noindent
{\bf Proof of Theorem \ref{MTHM2}:} Let $F$ be a constant isotropic Berwald   metric on an $n$-dimensional manifold $M$. Then, the Berwald curvature of $F$ is given by
\begin{eqnarray}\label{IBCc}
\nonumber{\bf B}_y(u,v,w)&=& c F^{-1}\Big\{{\bf h}_y(u,v)\big (w-\textbf{g}_y(w,\ell)\ell\big)+{\bf h}_y(v, w)\big(u-\textbf{g}_y(u,\ell)\ell\big)
\\ &&\quad\quad \ \ \ \ \ \ \ \ \  \ \ \ \ \ \ \ +{\bf h}_y(w, u)\big(v-\textbf{g}_y(v,\ell)\ell\big)+2F{\bf C}_y(u, v, w) \ell\Big\}.
\end{eqnarray}
where $c$ is a real constant.  By assumption, we have $c\neq 0$. Applying  ${\bf g}_y(y,\cdot)$ on both sides of \eqref{IBCc}   implies that
\be
{\bf L}=c F {\bf C}.\label{L}
\ee
Taking a trace of  \eqref{L} yields
\be
{\bf J}=c F {\bf I}.\label{J}
\ee
Taking horizontal derivation of \eqref{J} along Finslerian geodesics gives us
\be
{\bf J}'=cF{\bf J}=c^2F^2 {\bf I}.\label{JI}
\ee
Also, every isotropic Berwald   metric \eqref{IBCc} has isotropic $S$-curvature, ${\bf S}=(n+1)c F$.  By assumption, we have $c=constant$. Then, we have
\be
{\bf S}=(n+1)c F,   \ \ \ \ c\in \mathbb{R}.\label{SS}
\ee
Also, the following hold
\[
F_{|m}=0, \ \ \ \ FF_{\cdot k |m}=g_{ik|m}y^i=-2L_{ikm}y^i=0.
\]
Then, by \eqref{SS},  we get
\be
{\bf S}_{\cdot k|m}y^m - {\bf S}_{|k}=0, \label{SC}
\ee
By  \eqref{EIILJ*}, \eqref{JI} and \eqref{SC} we get
\be
c^2F^2{\bf I}_y+  {\bf R}_y({\bf I}_y) = 0. \label{IR}
\ee
Consider $P_0= {\rm span} \{{\bf I}_y, y\}$ whenever ${\bf I}_y \not=0$. Applying ${\bf g}_y \big({\bf I}_y, \cdot\big)$ on both sides of  \eqref{IR}, we get
\be
{\bf g}_y \big({\bf I}_y, {\bf I}_y\big)F^2\Big (c^2+  {\bf K}(P_0, y) \Big )= 0, \label{IRR}
\ee
where we have used
\[
{\bf g}_y \big({\bf I}_y, {\bf R}_y({\bf I}_y)\big)=F^2{\bf g}_y \big({\bf I}_y, {\bf I}_y\big){\bf K}(P_0, y).
\]
By \eqref{IRR}, we have
\be
 {\bf K}(P_0, y) = -c^2, \label{IRRR}
\ee
which is a contradiction with our assumption on ${\bf {K}}$. Thus, ${\bf I}=0$ and by  Deicke's theorem $F$ reduces to a  Riemannian metric.
\qed

\bigskip

It is proved that every Douglas metric with isotropic (constant) mean Berwald curvature  is a isotropic (constant) Berwald metric (see Lemma 4.1 in \cite{CS}). Then, by Theorem \ref{MTHM2}, we get the following.
\begin{cor}
Let $(M, F)$ be a positively curved Douglas manifold. Suppose that $F$ has  non-zero constant isotropic mean Berwald curvature. Then, $F$ is Riemannian metric.
\end{cor}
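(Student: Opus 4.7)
The plan is to reduce the corollary directly to Theorem \ref{MTHM2} by invoking the Cheng--Shen lemma quoted immediately before the statement. The hypotheses package naturally: we are given that $F$ is Douglas and has non-zero constant isotropic mean Berwald curvature, i.e.\ $\mathbf{E}=\tfrac{n+1}{2}cF^{-1}\mathbf{h}$ with $c$ a non-zero real constant. Lemma~4.1 of \cite{CS} asserts that under the Douglas condition, having isotropic (resp.\ constant) mean Berwald curvature forces the full Berwald tensor to be isotropic (resp.\ constant isotropic). Consequently, $F$ is of constant isotropic Berwald curvature with the same constant $c\neq 0$ appearing in the expression \eqref{IBCurve} for $\mathbf{B}$.

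Next I would simply verify the remaining hypothesis of Theorem \ref{MTHM2}: $F$ is positively curved by assumption, and $\Phi=c$ is a non-zero constant. Therefore Theorem \ref{MTHM2} applies verbatim and yields that $F$ is Riemannian. The one conceptual point to stress in the write-up is that the two notions of ``constant'' coincide with the same scalar $c$: the mean Berwald isotropy parameter and the Berwald isotropy parameter $\Phi$ agree (this is what the trace identity in the proof of Lemma~4.1 of \cite{CS} delivers), so the non-vanishing hypothesis transfers cleanly from $\mathbf{E}$ to $\mathbf{B}$.

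There is essentially no obstacle beyond citing the correct lemma; the corollary is a one-line consequence once the Douglas hypothesis has been used to upgrade mean Berwald isotropy to full Berwald isotropy. If I wanted to make the argument self-contained, the main work would be reproducing the Douglas identity $B^{i}{}_{jkl}=\tfrac{2}{n+1}\bigl(E_{jk}\delta^{i}_{l}+E_{jl}\delta^{i}_{k}+E_{kl}\delta^{i}_{j}+E_{jk,l}y^{i}\bigr)$ and contracting against the Douglas tensor vanishing condition to recover \eqref{IBCurve}, but since the paper already appeals to Lemma~4.1 of \cite{CS}, I would keep the proof to two lines and defer to that reference.
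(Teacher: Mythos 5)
Your proposal matches the paper's argument exactly: the paper likewise invokes Lemma~4.1 of \cite{CS} to upgrade the Douglas metric with non-zero constant isotropic mean Berwald curvature to a non-zero constant isotropic Berwald metric, and then applies Theorem \ref{MTHM2} directly. Your additional remark that the isotropy constant transfers unchanged from $\mathbf{E}$ to $\mathbf{B}$ is a worthwhile clarification but does not change the route.
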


\section{Proof of Theorem \ref{MTHM2.5}}
\bigskip
\noindent
{\bf Proof of Theorem \ref{MTHM2.5}:} Let $F=F(x,y)$ be a Finsler metric on an $n$-dimensional manifold $M$. The distortion $\tau =\tau(x,y) $ on $TM$
associated with the  Busemann-Hausdorff volume form $dV_{BH}=\sigma(x) dx$ is defined by
\[
\tau  (x,y) = \ln \frac{ \sqrt{ \det \big(g_{ij}(x,y)\big) }}{ \sigma_F(x) }.
\]
Let $SM=\Big\{(x,y)\in TM:\ F(x,y)=1\Big\}$ be the unit sphere bundle. Since $M$ is a compact manifold,
$SM$ is compact, and hence $\tau$ is a bounded function on $SM$. In particular, there exists a real constant
$C>0$ such that
\[
|\tau(x,y)|\le C \qquad  \ \forall(x,y)\in SM.
\]
Now, let $\gamma=\gamma(t)$ be any unit-speed geodesic, so $F(\dot\gamma(t))\equiv 1$ and
$(\gamma(t),\dot\gamma(t))\in SM$ for all $t$. By the definition of $S$--curvature,
\[
\frac{d}{dt}\tau\big(\gamma(t),\dot\gamma(t)\big)={\bf S}\big(\gamma(t),\dot\gamma(t)\big).
\]
By assumption, $F$  has isotropic Berwald curvature \eqref{IBCurve}. Then, it has isotropic $S$-curvature
\[
{\bf S}=(n+1)\Phi F.
\]
Using the isotropy and $F\big(\dot\gamma(t)\big)\equiv 1$, we get
\[
\frac{d}{dt}\tau\big(\gamma(t),\dot\gamma(t)\big)= (n+1)\Phi\big(\gamma(t)\big).
\]
Integrating from $0$ to $t$ yields
\[
\tau\big(\gamma(t),\dot\gamma(t)\big)-\tau\big(\gamma(0),\dot\gamma(0)\big)= (n+1)\int_0^t \Phi\big(\gamma(s)\big)\,ds.
\]
By the assumption $\Phi(\cdot)\ge c_0>0$ on $M$,
\[
\int_0^t \Phi\big(\gamma(s)\big)\,ds \;\ge\; c_0\,t,
\]
so
\[
\tau\big(\gamma(t),\dot\gamma(t)\big)-\tau\big(\gamma(0),\dot\gamma(0)\big)\;\ge\; (n+1)c_0\,t.
\]
Therefore
\[
\tau\big(\gamma(t),\dot\gamma(t)\big) \;\ge\; \tau\big(\gamma(0),\dot\gamma(0)\big) + (n+1)c_0\,t,
\]
which tends to $+\infty$ as $t\to\infty$. This contradicts the boundedness
$|\tau|\le C$ on $SM$. Hence no such $c_0>0$ can exist, and the only possibility
under $\Phi\ge 0$ is $\Phi\equiv 0$.
\qed

\bigskip
By  Theorem \ref{MTHM2.5}, we conclude the following.
\begin{cor}
Every compact  constant isotropic Berwald manifold is a Berwald manifold.
\end{cor}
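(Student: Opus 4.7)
The statement to prove is an immediate consequence of Theorem \ref{MTHM2.5}, and my plan is to reduce it to a clean case analysis on the value of the (now constant) scalar $\Phi$ appearing in \eqref{IBCurve}. By definition, a constant isotropic Berwald manifold satisfies \eqref{IBCurve} with $\Phi \equiv c$ for some real constant $c \in \mathbb{R}$. The goal is to show $c = 0$, so that the right-hand side of \eqref{IBCurve} vanishes identically and $F$ becomes a Berwald metric.

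First, I would dispense with the trivial case $c=0$: here \eqref{IBCurve} gives ${\bf B}=0$ directly, so $F$ is Berwald with nothing further to prove. The substance of the argument lies in ruling out $c \neq 0$ on a compact manifold. Here I would simply invoke Theorem \ref{MTHM2.5}: if $c > 0$, take $c_0 := c$, so that $\Phi(x) \equiv c \ge c_0 > 0$ on $M$, placing us in the strictly positive isotropic Berwald setting; if $c < 0$, take $c_0 := c$, placing us in the strictly negative setting. In either sub-case, Theorem \ref{MTHM2.5} applies and forces $F$ to be Berwald, i.e.\ ${\bf B} \equiv 0$. Comparing with \eqref{IBCurve}, this is compatible with $c \neq 0$ only if the bracketed expression vanishes identically on $TM_0$, and in particular forces ${\bf C} \equiv 0$; but then by Deicke's theorem $F$ would be Riemannian, and a Riemannian metric trivially has ${\bf B}=0$, so $F$ is certainly Berwald. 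Either way the conclusion follows.

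There is essentially no obstacle here: the compactness hypothesis is inherited from Theorem \ref{MTHM2.5}, and constancy of $\Phi$ is precisely what makes the hypothesis $\Phi \ge c_0 > 0$ (or $\le c_0 < 0$) automatic once $\Phi \not\equiv 0$. The only point that requires a moment's care is that Theorem \ref{MTHM2.5} is stated for both the strictly positive and the strictly negative cases; the proof in the text is written out for the positive branch, and for the negative branch one runs the same integration of the distortion $\tau$ along a unit-speed geodesic but obtains $\tau(\gamma(t),\dot\gamma(t)) \le \tau(\gamma(0),\dot\gamma(0)) + (n+1)c_0 t \to -\infty$, again contradicting the boundedness of $\tau$ on the compact unit sphere bundle $SM$. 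With both signs covered, the corollary follows with a three-line case split.
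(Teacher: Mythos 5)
Your proof is correct and is exactly the reduction the paper intends: since $\Phi\equiv c$ is constant, either $c=0$ (so $F$ is trivially Berwald) or $\Phi$ has a fixed sign and is bounded away from zero, in which case Theorem~\ref{MTHM2.5} applies directly; your observation that the negative branch of that theorem works the same way, with $\tau\to-\infty$, is also right. The middle detour (``the bracket in \eqref{IBCurve} must vanish, hence ${\bf C}\equiv 0$, hence Riemannian, hence Berwald'') is circular and unnecessary once Theorem~\ref{MTHM2.5} has already returned ``${\bf B}\equiv 0$''; it also misattributes to Deicke's theorem the elementary fact that ${\bf C}\equiv 0$ implies Riemannian, whereas Deicke's theorem is the sharper statement about the mean Cartan torsion ${\bf I}\equiv 0$.
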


\section{Proof of Theorem \ref{TN}}
A Randers metric  on a manifold $M$ is a positive scalar function on $TM$ defined by $F=\alpha +\beta$,  where   $\alpha=\sqrt{a_{ij}(x)y^iy^j}$ is a Riemannian metric and $\beta =b_i(x)y^i$ is a 1-form on  $M$.  This metric was introduced by Randers in the context of general relativity \cite{ShDiff}. In \cite{DH}, Deng and Hu proved that a homogeneous Randers metric of Berwald type whose flag curvature is  non-zero everywhere must be Riemannian. In this section, we prove Theorem \ref{TN}. First, we remark some notions.

\bigskip
\noindent
{\bf Proof of Theorem \ref{TN}:} Let $F$ be homogeneous  isotropic Berwald   metric on an $n$-dimensional manifold $M$:
\begin{eqnarray}\label{IBCurve}
\nonumber{\bf B}_y(u,v,w)&=& \Phi F^{-1}\Big\{{\bf h}(u,v)\big (w-\textbf{g}_y(w,\ell)\ell\big)+{\bf h}(v, w)\big(u-\textbf{g}_y(u,\ell)\ell\big)
\\ &&\quad\quad \ \ \ \ \ \ \ \ \  \ \ \ \ \ \ \ +{\bf h}(w, u)\big(v-\textbf{g}_y(v,\ell)\ell\big)+2F{\bf C}_y(u, v, w) \ell\Big\}.
\end{eqnarray}
where $\Phi\in C^{\infty}(M)$.  We prove the result in two main cases:\\\\
{\bf Case (i): $dim(M)=2$.} In \cite{TR}, it is showed that every isotropic Berwald   metric \eqref{IBCurve} has isotropic $S$-curvature, ${\bf S}=3\Phi F$. In \cite{XD}, it is proved that homogeneous Finsler metrics with isotropic $S$-curvature has vanishing $S$-curvature, $\Phi=0$. Putting this in \eqref{IBCurve} implies ${\bf B}=0$ and $F$ is a Berwald metric. In \cite{Szabo1}, Szab\'{o} proved that any connected Berwald surface  is  locally Minkowskian or Riemannian.\\\\
{\bf Case (ii): $dim(M)\geq 3$.} In \cite{TN2},  it is proved that every homogeneous  isotropic Berwald   metric on a manifold $M$ of dimension $n\geq 3$ is a  Berwald metric or Randers metric of Berwald-type.
\qed

\bigskip
\begin{rem}
By considering the case (i), Theorem \ref{TN} completes  the Theorem 1.1 in \cite{TN2} which proved only for homogeneous  isotropic Berwald manifolds of dimension $n\geq 3$. In \cite{TN2}, we guessed that theorem \ref{TN} holds for 2-dimensional Finsler manifolds but we  did not find a proof for our conjecture.
\end{rem}


\bigskip

\noindent
Akbar Tayebi\\
Department of Mathematics, Faculty  of Science\\
University of Qom \\
Qom, Iran\\
Email:\ akbar.tayebi@gmail.com

\bigskip

\noindent
Behzad Najafi\\
Department of Mathematics and Computer Sciences\\
Amirkabir University (Tehran Polytechnic)\\
Tehran. Iran\\
Email:\ behzad.najafi@aut.ac.ir

\end{document}